\newtheorem{theorem}{Theorem}[section]
\newtheorem{definition}{Definition}[section]
\newtheorem{lemma}[theorem]{Lemma}
\theoremstyle{remark}
\newtheorem*{remark}{Remark}
\newtheorem*{example}{Example}
\title{Topology of Singularities on Algebraic Loop Spaces}
\author{EMILE BOUAZIZ}
\begin{document}
\maketitle

\begin{abstract} We study the topology of some simple infinite dimensional singularities arising from spaces of \emph{algebraic formal loops}. We prove that in some simple cases the natural analogue of nearby cycles cohomology for a function on the loop space vanishes, and show further that a suitably renormalized version of the above cohomology produces a simple (non-zero) result. \end{abstract}

\section{introduction} The goal of this note is to present some rather simple computations of the cohomology of some infinite dimensional spaces.  If $X$ is an affine variety (over $\mathbb{C}$) with a regular function $F:X\rightarrow\mathbb{A}^{1}$ which is topologically a fibration over $\mathbb{G}_{m}$, then a measure of the singularity of $F$ over $\{0\}$ is given by the functor of vanishing cycles and its close relative, nearby cycles. At a crude approximation (which will be sufficient for our purposes) these functors measure the difference in central and nearby fibre cohomology.

 If one restricts to the simple case where the central fibre is contractible, for example if $F$ is homogeneous, then to compute vanishing cycles one must simply compute the cohomology of the nearby fibre. According to a result of Milnor, and under the additional assumption that the singularity at the origin be isolated, vanishing cohomology of such a function is concentrated in degree $\dim X-1$, and of dimension the \emph{Milnor number}, $\mu(X)$. 

Attached to $X$ is a certain infinite dimensional space (in general not representable by a scheme, but still fairly manageable) of loops into $X$, denoted $\mathcal{L}X$. Further, the data of $F$ endows this space of loops with a natural global function, denoted $\Lambda(F)$ which can roughly be defined as the constant (in $t$) term obtained upon replacing local coordinate functions $z^{i}$, in $F(z^{1},...,z^{d})$, with Laurent series $z^{i}(t)=\sum_{j}z^{i}_{j}t^{j}$, so that the variables $z^{i}_{j}$ are functions on the loop space of $X$. Everything in this context is highly infinite dimensional and so one certainly should not expect a simple analogue of the entire vanishing cycles package, but it is of course still possible to compare central and nearby fibre cohomology, which will be the subject of this note.

\begin{example} A useful example to bear in mind is the pair $(\mathbb{A}^{1},z^{2})$. We have $$\Lambda(z^{2})=z_{0}^{2}+2\sum_{j>0}z_{j}z_{-j}\in\mathcal{O}(\mathcal{L}\mathbb{A}^{1}).$$ We think of this as an (ind-) infinite dimensional affine quadric. Truncating the Laurent tails by requiring no poles of order greater than $n$, we obtain the function $z_{0}^{2}+2\sum_{j=1}^{n}z_{j}z_{-j}$. The nearby fibre cuts out the product of a quadric in $\mathbb{A}^{2n+1}$  with an infinite dimensional affine space, and is thus homeomorphic to $T^{*}S^{2n}\times\mathbb{R}^{\infty}$. The nearby fibres are therefore homotopic to $S^{2n}$, so that in the limit we obtain a weakly contractible space. If we instead computed a colimit of maps along Gysin maps on cohomology we would obtain a one dimensional space in degree $0$, which we note agrees with vanishing cohomology of the pair $(\mathbb{A}^{1},z^{2})$. \end{example} 

We compute the relevant spaces explicitly in the case of homogeneous $F$ with an isolated singularity, that is to say the cone singularity for the affine cone on a smooth projective hypersurface. Further, we observe that, in the spirit of \cite{AK}, there is a natural method by which to renormalize the cohomology spaces, so as to avoid classes running off to infinity and thus vanishing in the limit. It is in fact this renormalized/ stabilized cohomology with which we deal primarily in this note. We remark that the methods presented here are quite elementary, we only make use of basic properties of Gysin maps and residues, the reader familiar with loop spaces and cohomology of ind-schemes more broadly can safely skip straight to Section 3, and indeed is encouraged to do so. 

We state here our main theorem, undefined terms will be defined in the following sections. \begin{tcolorbox}\begin{theorem} Let $F:\mathbb{A}^{d}\rightarrow\mathbb{A}^{1}$ be a homogeneous function with an isolated singularity at $0$. Then the renormalized nearby cohomology of the pair $(\mathcal{L}\mathbb{A}^{d},\Lambda(F))$ is concentrated in degree $d-1$ and of dimension $\mu(F)$. \end{theorem}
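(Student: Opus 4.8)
The plan is to push the computation down to finite dimensions and then recognise a Thom--Sebastiani pattern. Write a loop as $z^{i}(t)=\sum_{j}z^{i}_{j}t^{j}$ and let $\mathcal{L}^{\geq -n}\mathbb{A}^{d}\subset\mathcal{L}\mathbb{A}^{d}$ be the sub-ind-scheme of loops with a pole of order at most $n$. Substituting $z^{i}(t)=t^{-n}u^{i}(t)$ and using homogeneity, $F(t^{-n}u(t))=t^{-kn}F(u(t))$ with $k=\deg F$, so the restriction $\Lambda_{n}$ of $\Lambda(F)$ to $\mathcal{L}^{\geq -n}\mathbb{A}^{d}$ is the coefficient of $t^{kn}$ in $F(u(t))$ for an arc $u(t)=\sum_{j\geq 0}u^{i}_{j}t^{j}$; in particular $\Lambda_{n}$ is a homogeneous polynomial of degree $k$ in the finitely many variables $u^{i}_{j}$ with $0\le j\le kn$, the remaining $u^{i}_{j}$ forming a contractible free factor. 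I would first unwind the definition of renormalized nearby cohomology from the previous section: it is the colimit, along the Gysin maps of the inclusions $\mathcal{L}^{\geq -n}\mathbb{A}^{d}\hookrightarrow\mathcal{L}^{\geq -(n+1)}\mathbb{A}^{d}$, of the cohomology of the nearby fibres $Y_{n}:=\{\Lambda_{n}=1\}$, regraded so that these Gysin maps are degree-preserving; since these inclusions have complex codimension $d$, the regrading shifts cohomological degree at stage $n$ by $2dn$, and $Y_{0}$ is, up to a contractible factor, the ordinary Milnor fibre $M_{F}=\{F=1\}$.

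The computation then rests on a homotopy identification of $Y_{n}$, which I would state and prove as the main lemma: $Y_{n}$ is homotopy equivalent to the join $M_{F}*S^{2dn-1}$, compatibly with the inclusions $Y_{n}\hookrightarrow Y_{n+1}$, and hence --- by Milnor's theorem applied to the isolated singularity $F$ --- to a wedge of $\mu(F)$ spheres $S^{2dn+d-1}$ in the single dimension $2dn+d-1$. Granting this: $\widetilde H^{*}(Y_{n})$ is $\mathbb{C}^{\mu(F)}$ in degree $2dn+d-1$ and zero otherwise, so in the renormalized grading it is $\mathbb{C}^{\mu(F)}$ in degree $d-1$ and vanishes in every other degree at every stage; and the map $Y_{n}\hookrightarrow Y_{n+1}$ is, through the join picture, the join of $\mathrm{id}_{M_{F}}$ with a great-sphere inclusion $S^{2dn-1}\hookrightarrow S^{2d(n+1)-1}$, whose Gysin map is an isomorphism on top cohomology. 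Passing to the colimit therefore yields exactly $\widetilde H^{d-1}(M_{F})\cong\mathbb{C}^{\mu(F)}$, concentrated in renormalized degree $d-1$. Residues enter only in reading off the dimension: $\widetilde H^{d-1}(M_{F})$ is the Milnor algebra $\mathbb{C}[z]/J_{F}$, of dimension $\mu(F)$, via the Grothendieck residue pairing.

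The hard part is the main lemma. When $\deg F=2$ it is immediate: $\Lambda_{n}$ is then a nondegenerate quadratic form in $d(2n+1)$ variables, $Y_{n}$ is the corresponding smooth affine quadric times a contractible factor, hence homotopy equivalent to $S^{2dn+d-1}$, and $M_{F}\simeq S^{d-1}$ with $\mu(F)=1$; this is precisely the example in the introduction. For $\deg F\geq 3$, however, $\Lambda_{n}$ genuinely fails to have an isolated singularity --- already $\Lambda_{1}$ for $z^{3}$ is $z_{0}^{3}+6z_{-1}z_{0}z_{1}+3z_{-1}^{2}z_{2}$, singular along the plane $z_{0}=z_{-1}=0$ --- and it is not right-equivalent to $F+(\text{quadratic form})$, so Thom--Sebastiani does not apply on the nose. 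The route I would take is via the projection $\pi\colon Y_{n}\to\mathbb{A}^{d}$ recording the constant term $c=(z^{i}_{0})$: expanding $F$ about $c$ and extracting the $t^{0}$-part gives $\Lambda_{n}=F(c)+Q_{c}+(\text{higher order})$, where $Q_{c}$ is, up to the Hessian of $F$ at $c$, the hyperbolic pairing of the pole coordinates $z^{i}_{-l}$ with the tail coordinates $z^{i}_{l}$ and has rank $2dn$; over the Stein open $\{c : F(c)\neq 1\}$ the fibre of $\pi$ is, after rescaling, a perturbation by the higher-order terms of the smooth affine quadric $\{Q_{c}=1\}\simeq S^{2dn-1}$, and one must show the perturbation preserves this homotopy type --- here one uses the conical symmetry of $\Lambda_{n}$ to control behaviour at infinity --- and then assemble $Y_{n}$ from this $S^{2dn-1}$-bundle together with the contractible central fibres over $\{F(c)=1\}$ to recognise the join $M_{F}*S^{2dn-1}$. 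The delicate points, where I expect the real work to lie, are the tameness of the higher-order perturbation and the behaviour over the Hessian-degeneracy locus of $F$ inside $\{F(c)=1\}$; an alternative is to produce an explicit deformation of $\Lambda_{n}$, through functions of constant Milnor number, to $F\oplus Q$ with $Q$ a quadratic form in genuinely disjoint variables, to which Thom--Sebastiani then applies directly.
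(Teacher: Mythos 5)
Your reduction to finite dimensions and your identification of what the answer ought to be are both correct, but the proof has a genuine gap at its centre: the ``main lemma'' asserting that $Y_{n}\simeq M_{F}*S^{2dn-1}$ compatibly with the inclusions $Y_{n}\hookrightarrow Y_{n+1}$ is never proved, and you yourself flag the two points where it could fail (tameness of the higher-order perturbation of the quadric bundle at infinity, and the behaviour over the Hessian-degeneracy locus of $F$ inside $\{F(c)=1\}$). Everything downstream --- the wedge-of-spheres description, the claim that the Gysin map is the join of $\mathrm{id}_{M_{F}}$ with an equatorial sphere inclusion and hence an isomorphism on top cohomology --- depends on this unproven statement, and indeed on the stronger relative form of it (compatibility with the filtration), which your quadric-bundle sketch does not address even in outline. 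So as written this is a program, not a proof; the degree-$2$ case you verify is exactly the one where no work is needed.

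The paper's argument shows that this absolute homotopy identification is not needed, and this is worth internalising: instead of computing $H^{*}(Y_{n})$ for each $n$, one computes the cohomology of the \emph{difference} $\mathcal{U}^{n}=Y_{n+1}\setminus Y_{n}$ and feeds it into the Gysin long exact sequence relating $H^{*}(Y_{n})$, $H^{*+2d}(Y_{n+1})$ and $H^{*+2d}(\mathcal{U}^{n})$. The point is that by homogeneity the derivative of $\Lambda(F)$ with respect to a top-conformal-weight variable $z^{j}_{N}$ equals $\partial_{j}F$ evaluated on the deepest pole coefficients $(z^{1}_{-(n+1)},\dots,z^{d}_{-(n+1)})$; the isolated-singularity hypothesis then says these do not vanish simultaneously on $\mathcal{U}^{n}$, so $\Lambda(F)=1$ can be solved affine-linearly for one of the $z^{j}_{N}$ locally over $\mathbb{A}^{d}\setminus\{0\}$, exhibiting $\mathcal{U}^{n}$ as a Zariski-locally trivial affine-space bundle over $\mathbb{A}^{d}\setminus\{0\}$. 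Hence $H^{*}(\mathcal{U}^{n})=H^{*}(S^{2d-1})$, the long exact sequence gives stabilisation of the renormalized groups immediately, and a single explicit residue computation (of $d\log z^{1}_{-n}\cdots d\log z^{d}_{-n}$) kills the potentially surviving degree-$0$ class. This is entirely elementary and sidesteps all the perturbation-theoretic difficulties your join lemma would require. If you want to salvage your approach, note that the paper's computation actually recovers (additively) the cohomology your lemma predicts for each $Y_{n}$, so the join decomposition is best viewed as a plausible corollary rather than as an input.
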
\end{tcolorbox} 

We remark that the above computation agrees with the corresponding (reduced) nearby cohomology of the pair $(\mathbb{A}^{d},F)$, which maps to $(\mathcal{L}\mathbb{A}^{d},\Lambda(F))$ via the inclusion of constant loops into loops. As such, the above could be phrased as a cohomology vanishing theorem for the open complement.

Further, over the course of the proof of the above we will also prove the following, \begin{theorem} The reduced (co)homology of the nearby fibre to $\Lambda(F)$ vanishes. \end{theorem}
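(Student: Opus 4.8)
The plan is to realise the nearby fibre as a filtered colimit of finite-dimensional Milnor fibres whose connectivities tend to infinity, and then to quote the connectivity estimate of Kato--Matsumoto. Write $m=\deg F$ and filter $\mathcal{L}\mathbb{A}^{d}=\mathbb{A}^{d}((t))$ by the closed sub-ind-schemes $\mathcal{L}^{\le n}\mathbb{A}^{d}=t^{-n}\mathbb{A}^{d}[[t]]$ of loops with a pole of order at most $n$; these exhaust $\mathcal{L}\mathbb{A}^{d}$. A degree count shows that the restriction $G_{n}:=\Lambda(F)|_{\mathcal{L}^{\le n}\mathbb{A}^{d}}$ involves only the coordinates $z^{i}_{k}$ with $-n\le k\le (m-1)n$, and is homogeneous of degree $m$ in them (the scaling $z\mapsto\mu z$ of $\mathbb{A}^{d}$ multiplies $\Lambda(F)$ by $\mu^{m}$). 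Hence, if $\mathcal{N}$ denotes the nearby fibre $\{\Lambda(F)=1\}$, then $\mathcal{N}\cap\mathcal{L}^{\le n}\mathbb{A}^{d}\cong V_{n}\times\mathbb{A}^{\infty}$, where $V_{n}\subset\mathbb{A}^{(mn+1)d}$ is the Milnor fibre of the homogeneous polynomial $G_{n}$ and the second factor is the contractible infinite-dimensional affine space on the remaining coordinates $z^{i}_{k}$, $k>(m-1)n$ --- exactly the situation of the Example. Since $\mathcal{N}=\mathrm{colim}_{n}(\mathcal{N}\cap\mathcal{L}^{\le n}\mathbb{A}^{d})$ along closed inclusions, reduced homology commutes with the colimit and reduced cohomology sits in a Milnor $\lim$--$\lim^{1}$ sequence; so it suffices to prove that the connectivity of $V_{n}$ tends to $\infty$ with $n$, for then $\tilde H_{j}(V_{n})=0=\tilde H^{j}(V_{n})$ for all $n\gg 0$ in each fixed degree $j$.

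For the connectivity of $V_{n}$ I would invoke the theorem of Kato and Matsumoto refining Milnor: for holomorphic $g:(\mathbb{C}^{N},0)\to(\mathbb{C},0)$ with $s=\dim_{0}\mathrm{Sing}(g^{-1}(0))$, the Milnor fibre of $g$ at $0$ is $(N-s-2)$-connected. As $G_{n}$ is homogeneous its singular locus is a cone, so the theorem reduces to the single estimate that $\dim\mathrm{Sing}(G_{n}^{-1}(0))$ grows linearly in $n$ with leading coefficient strictly smaller than that of the ambient dimension $N_{n}=(mn+1)d$.

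This is where the isolated-singularity hypothesis enters. Differentiating $\Lambda(F)$, the coefficient of $t^{0}$ in $F(z(t))$, one gets $\partial\Lambda(F)/\partial z^{i}_{k}$ equal to the coefficient of $t^{-k}$ in $\partial_{i}F(z(t))$, so a point of $\mathcal{L}^{\le n}\mathbb{A}^{d}$ is singular for $G_{n}$ exactly when, for every $i$, the Laurent series $\partial_{i}F(z(t))$ has vanishing coefficient in every degree between $-(m-1)n$ and $n$ (and then $G_{n}=0$ automatically, by the Euler relation). Now induct on pole order: if $z(t)=z_{\ell}t^{\ell}+(\text{higher order})$ with $-n\le\ell\le n/(m-1)$, homogeneity of $\partial_{i}F$ of degree $m-1$ shows that the coefficient of $t^{(m-1)\ell}$ in $\partial_{i}F(z(t))$ equals $\partial_{i}F(z_{\ell})$; since $(m-1)\ell\le n$ this coefficient is constrained to vanish, whence $z_{\ell}\in\bigcap_{i}\{\partial_{i}F=0\}=\{0\}$ --- precisely the point at which $F$ homogeneous with an isolated singularity is used. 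Thus on the singular locus $z^{i}_{\ell}=0$ for all $\ell\le\lfloor n/(m-1)\rfloor$, so $\dim\mathrm{Sing}(G_{n}^{-1}(0))\le d\bigl((m-1)n-\tfrac{n}{m-1}\bigr)+O(d)$ and therefore $N_{n}-\dim\mathrm{Sing}(G_{n}^{-1}(0))-2\ge\tfrac{m}{m-1}dn-O(d)\to\infty$, as needed. (For $(\mathbb{A}^{1},z^{2})$ this recovers $V_{n}\simeq S^{2n}$, consistently with the Example.)

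The main obstacle is this singular-locus estimate: one must check that the pole-order induction is genuinely watertight --- in particular that no interaction between deep poles and high-order regular terms produces a low-degree relation escaping the argument --- and that the resulting bound on $\dim\mathrm{Sing}(G_{n}^{-1}(0))$ really has the stated linear growth; granting this, the remainder (the ind-scheme bookkeeping and the appeal to Kato--Matsumoto) is formal. A minor secondary point is to confirm that the colimit and $\lim^{1}$ arguments are valid for the topology in use on $\mathcal{L}\mathbb{A}^{d}$, which is standard since the subspaces $\mathcal{N}\cap\mathcal{L}^{\le n}\mathbb{A}^{d}$ are closed and exhaustive.
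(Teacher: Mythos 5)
Your proof is correct, and it reaches the key connectivity estimate by a genuinely different route from the paper's. The skeleton is the same: both arguments exhibit the nearby fibre as a colimit of pieces $V_{n}\times\mathbb{A}^{\infty}$ along closed embeddings and show that the connectivity of $V_{n}$ grows linearly in $n$, so that in each fixed degree nothing survives to the limit (your $\lim$--$\lim^{1}$ point is harmless, since the pro-system is eventually zero in every degree and hence Mittag-Leffler). The paper obtains the connectivity as a byproduct of its main computation: it runs the Gysin sequence for the codimension-$d$ inclusion $\mathcal{X}^{n}_{N}\hookrightarrow\mathcal{X}^{n+1}_{N}$, shows the open complement $\mathcal{U}^{n}$ is an affine-space bundle over $\mathbb{A}^{d}\setminus\{0\}$ and so has the cohomology of $S^{2d-1}$, and inducts on $n$ to push the reduced cohomology of $\mathcal{X}^{n}$ above degree roughly $2nd$. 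You instead bound the dimension of the critical locus of the truncated polynomial $G_{n}$ and invoke Kato--Matsumoto. Both arguments use the isolated-singularity hypothesis through the same mechanism, namely that $\partial_{z^{j}_{k}}\Lambda(F)$ is the coefficient of $t^{-k}$ in $\partial_{j}F(z(t))$ and that for $z(t)=z_{\ell}t^{\ell}+O(t^{\ell+1})$ the leading coefficient of $\partial_{j}F(z(t))$ is $\partial_{j}F(z_{\ell})$; the paper applies this only at the deepest pole order to trivialize $\mathcal{U}^{n}$, while you iterate it over all $\ell\le\lfloor n/(m-1)\rfloor$. Your worry about the induction is unfounded: once $z_{j}=0$ for $j<\ell$, every monomial of $\partial_{i}F(z(t))$ other than the one built from $z_{\ell}$ alone contributes strictly higher powers of $t$, so the coefficient of $t^{(m-1)\ell}$ is exactly $\partial_{i}F(z_{\ell})$ and the pole-order induction is watertight; the resulting codimension bound $d\bigl(n+1+\lfloor n/(m-1)\rfloor\bigr)$ on the critical locus is correct and gives connectivity of order $\tfrac{m}{m-1}dn$. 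What your route buys is independence from the Gysin formalism, at the price of citing Kato--Matsumoto and the standard identification of the global fibre $\{G_{n}=1\}$ of a homogeneous polynomial with its local Milnor fibre at the origin; what the paper's route buys is economy, since the Gysin computation is needed anyway for the renormalized cohomology in the main theorem, so the vanishing statement there comes for free.
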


\subsection{Acknowledgements} Some of the results presented here were obtained when the author was a PhD student at the University of Cambridge under the supervision of Ian Grojnowski, whom the author wishes to warmly thank.

\section{basics}
We will recall here some basic notions in the theory of loop spaces. We will work throughout over the field \(\mathbb{C}\) of complex numbers. By a \emph{space} we will mean a pre-sheaf on the category of affine \(\mathbb{C}\)-schemes, so that to give a space is simply to give its value on all test algebras $A$. We will deal in particular with \emph{ind-schemes}, which are filtered colimits (taken inside the pre-sheaf category) of schemes along closed embeddings. Given a set \(S\), throughout we will write \(\mathbb{A}\{S\}\) for the affine space with ring of functions, $\mathbb{C}[S]$, the free commutative $\mathbb{C}$-algebra generated by $S$.

\subsection{Loop Spaces} We now define some of our basic objects of study, mostly to fix notation. \(X\) will denote an arbitrary space below.

\begin{definition} \begin{enumerate} \item The \emph{arc space} of \(X\), \(\mathcal{L}^{+}X\), is defined as the space with \(A\)-points \(\mathcal{L}^{+}X(A)=X(A[[t]])\)
\item The \emph{loop space}, denoted \(\mathcal{L}X\), is defined as the space with \(A\)-points \(\mathcal{L}X(A)=X(A((t)))\).
\item There is an evident \(\mathbb{G}_{m}\) action on \(\mathcal{L}X\). Functions on \(\mathcal{L}X\) of weight \(w\) for this action will be said to be of \emph{conformal weight} \(w\).
\end{enumerate}\end{definition}

We record some simple and well-known facts below. For a more detailed introduction to these objects we recommend \cite{KV} and its sequels. 

\begin{lemma} \begin{enumerate} \item If \(X\) is a scheme then the functor \(\mathcal{L}^{+}X\) is representable by a scheme.
\item If \(X\) is moreover finite type and affine, then the functor \(\mathcal{L}X\) is representable by an ind-scheme, and in fact by an ind-affine scheme, that is to say a filtered colimit of affine schemes along closed embeddings. 
\item If \(X\) is smooth then both the loop and the arc space are formally smooth. \end{enumerate}
\end{lemma}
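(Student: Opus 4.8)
The plan is to recall these as standard facts, handling the three parts in turn and reducing everything to explicit coordinates in the affine case. For part (1) I would first compute $\mathcal{L}^{+}X$ for $X=\mathbb{A}^{n}$: here $\mathcal{L}^{+}\mathbb{A}^{n}(A)=A[[t]]^{n}$, and expanding $z_{i}=\sum_{j\geq 0}z_{i}^{(j)}t^{j}$ exhibits this functor as the affine space $\mathbb{A}\{z_{i}^{(j)}:1\leq i\leq n,\ j\geq 0\}$ on the coefficient variables. For a closed affine $X=\operatorname{Spec}\mathbb{C}[x_{1},\dots,x_{n}]/(f_{1},\dots,f_{m})\hookrightarrow\mathbb{A}^{n}$, substituting the series $z_{i}$ into each $f_{k}$ and collecting powers of $t$ rewrites the condition $f_{k}(z)=0$ as the simultaneous vanishing of the coefficient polynomials $f_{k}(z)^{(l)}\in\mathbb{C}[z_{i}^{(j)}]$, $l\geq 0$; hence $\mathcal{L}^{+}X$ is the closed subscheme of $\mathcal{L}^{+}\mathbb{A}^{n}$ they cut out, again a scheme. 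For general $X$ I would pass through the jet schemes $J_{n}X$ with $A$-points $X(A[t]/t^{n+1})$: these are representable (the affine case as above, the general case by gluing, since $A[t]/t^{n+1}\to A$ is a nilpotent thickening, so $J_{n}$ of an open immersion is again an open immersion), and then $\mathcal{L}^{+}X\cong\varprojlim_{n}J_{n}X$ is an inverse limit of schemes along the affine truncation maps, hence a scheme.

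For part (2), since $X$ is finite type and affine I fix a closed embedding $X\hookrightarrow\mathbb{A}^{n}$ as above. The key set-theoretic observation is the filtered union $A((t))=\bigcup_{N\geq 0}t^{-N}A[[t]]$ by pole order, so that any $A((t))$-point of $X$ has all coordinates of pole order at most some $N$. This gives $\mathcal{L}X=\operatorname{colim}_{N}\mathcal{L}^{\leq N}X$, where $\mathcal{L}^{\leq N}X(A)=\{z\in(t^{-N}A[[t]])^{n}:f_{k}(z)=0\}$. Expanding $z_{i}=\sum_{j\geq -N}z_{i}^{(j)}t^{j}$ and once more collecting coefficients realizes each $\mathcal{L}^{\leq N}X$ as an affine scheme on the variables $z_{i}^{(j)}$, $j\geq -N$, while the transition $\mathcal{L}^{\leq N}X\to\mathcal{L}^{\leq N+1}X$ is the closed immersion imposing $z_{i}^{(-N-1)}=0$; on the ambient affine spaces this is the quotient killing the bottom coefficients, and it restricts to the subschemes cut out by the $f_{k}$. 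Thus $\mathcal{L}X$ is a filtered colimit of affine schemes along closed immersions, i.e.\ ind-affine.

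For part (3), I would invoke the infinitesimal lifting criterion: a presheaf $Y$ is formally smooth exactly when $Y(A')\to Y(A)$ is surjective for every square-zero extension $A'\twoheadrightarrow A$ of test algebras. The crucial point is that the operations $A\mapsto A[[t]]$ and $A\mapsto A((t))$ carry square-zero extensions to square-zero extensions: if $I=\ker(A'\to A)$ satisfies $I^{2}=0$, then $I[[t]]$ and $I((t))$ are the respective kernels and square to zero, while surjectivity of $A'\to A$ evidently persists after applying $[[t]]$ or $((t))$ coefficientwise. Since $X$ smooth implies $X$ formally smooth, lifts of $A[[t]]$- (resp.\ $A((t))$-) points of $X$ along these square-zero extensions always exist, which is precisely surjectivity of $\mathcal{L}^{+}X(A')\to\mathcal{L}^{+}X(A)$ (resp.\ of $\mathcal{L}X(A')\to\mathcal{L}X(A)$); hence both are formally smooth.

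The step I expect to be the main obstacle is the non-affine case of (1) — establishing the comparison $\mathcal{L}^{+}X\cong\varprojlim_{n}J_{n}X$ and the representability of that limit — together with the verification in (2) that the pole-order filtration is genuinely by closed immersions and computes $\mathcal{L}X$ on the nose. The affine coordinate computations and part (3) are essentially formal, and for the applications in this note (where $X=\mathbb{A}^{d}$) the affine arguments already suffice; I would refer to \cite{KV} for the remaining gluing details.
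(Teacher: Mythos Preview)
Your proposal is correct and follows the standard arguments. Note, however, that the paper does not actually prove this lemma: it is stated as a collection of ``simple and well-known facts'' and the reader is referred to \cite{KV} for details. So there is no proof in the paper to compare against; your write-up supplies exactly the kind of standard verification the paper omits, and your closing remark that one may defer the non-affine gluing to \cite{KV} is in the same spirit as the paper's treatment.
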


\begin{remark} Giving an ind-affine scheme is equivalent to giving a complete and \emph{linearly topologized} \(\mathbb{C}\)-algebra, that is a complete topological algebra with a neighbourhood basis at \(0\) consisting of ideals.\end{remark}
\begin{example}The case of \(\mathcal{L}\mathbb{A}^{1}\) corresponds to the linearly topologized algebra $\lim_{n}\,\mathbb{C}[\mathbb{Z}_{\geq -n}]$. The function \(z_{i}\) is of conformal weight \(i\). The topology is such that for example the infinite sum $\sum_{j}z_{j}z_{-j}$ is convergent. \end{example}
We now introduce the functions on the loop space that we wish to consider. We will assume \(X\) is an affine scheme in what follows. If \(X=spec(A)\), then we will  write $$X\widehat{\times}D^{*}=spec(A((t))).$$ We extend the definition of \((-)\widehat{\times}D^{*}\) to ind-affine schemes by imposing compatibility with filtered colimits. Note that if an ind-affine scheme \(\mathcal{X}\) corresponds to a linearly topologized algebra \(A\), then the ind-affine scheme \(\mathcal{X}\widehat{\times}D^{*}\) corresponds to the algebra, \(A\{t\}\), of \emph{topological Laurent series} over \(A\), i.e. the algebra of formal series in \(t\) and \(t^{-1}\) with negative tails tending to \(0\); \(\{\sum_{i}a_{i}t^{i}\mid a_{i}\rightarrow 0, i\rightarrow -\infty\}\), this is of course the natural completion of the naive algebra of $A$-valued Laurent series. 
\begin{definition} The functor of points  definition of \(\mathcal{L}X\) produces a universal map $$\mathcal{L}X\widehat{\times}D^{*}\longrightarrow X.$$ We call this the \emph{evaluation map} and denote it $\eta$. If \(f\) is a function on \(X\), we denote by $\Lambda(f)$ the constant term of $\eta^{*}(f)$. \end{definition}
\begin{example} In the case of \(\mathbb{A}^{1}\), $\eta$ can be understood very easily. Indeed, by definition it is a global function on $\mathcal{L}\mathbb{A}^{1}\widehat{\times}\,D^{*}$, the algebra of such is the algebra of topological power series over thecomplete  linearly topologized algebra $\lim_{n}\,\mathbb{C}[\mathbb{Z}_{\geq -n}]$. It is then readily seen that $\eta$ corresponds to the (universal) topological Laurent series $$z(t):=\sum_{i}z_{i}t^{i}.$$ As such we see for example that $$\Lambda(z^{2})=\sum_{i}z_{i}z_{-i}.$$ In general, working with coordinates \(\{z^{j}\}_{j}\), one substitutes the sums \(z^{j}(t)=\sum_{i}z^{j}_{i}t^{i}\) into \(f\) and formally expands, then takes the constant term. \end{example}

\subsection{Homology and Cohomology of Ind-Schemes} If $\mathcal{Y}$ is an ind-scheme space, then the set of $\mathbb{C}$ points of $\mathcal{Y}$ is topologized as the colimit of spaces $Y(\mathbb{C})$, with $Y\rightarrow\mathcal{Y}$ a closed embedding of a scheme $Y$ into $\mathcal{Y}$.

\begin{example} Consider the ind-scheme $\mathbb{A}^{\infty}=\lim_{n}\,\mathbb{A}^{n}$, and let $\mathcal{Y}=\mathbb{A}^{\infty}\setminus\{0\}$. Then $\mathcal{Y}(\mathbb{C})$ is the weakly contractible space of non-zero finitely supported sequences of elements of $\mathbb{C}$. \end{example}

We remark that the above example illustrates a fairly typical phenomenon, the support of the reduced homology of the spaces in a colimit presentation of the ind-scheme $\mathcal{Y}$ tends to infinity, and so nothing survives to the limit. Note that the same thing happens for cohomology, as the relevant $\mathbb{R}\lim$ terms vanish in such cases, as the resulting pro-systems are manifestly Mittag-Leffler. One might then hope that a suitable renormalization procedure could remedy this. In \cite{AK} the authors note that such is possible for sufficiently well behaved ind-schemes. 

\begin{definition} If \(X\) is a scheme then we call it \emph{smooth} if it can be represented as the filtered limit of smooth finite type schemes. \end{definition}
\begin{remark} This implies formal smoothness of \(X\). \end{remark} 

We extend this to ind-schemes in an evident manner:
\begin{definition} If \(\mathcal{X}\) is an ind-scheme, we call it \emph{smooth} if it can be represented as the filtered colimit of smooth schemes in the sense of the definition above. \end{definition} 
\begin{remark} This is a strong condition, and indeed is not fulfilled by the loop space to an arbitrary smooth scheme, for example $\mathcal{L}\mathbb{G}_{m}$ is non-reduced, and thus certainly not smooth in this sense. Note that on the other hand $\mathcal{L}\mathbb{A}^{1}$ certainly is smooth. \end{remark}

Henceforth, for a scheme \(X\), when we write \(H^{*}(X)\) it is to be understood as \(H^{*}(X(\mathbb{C}),\mathbb{C})\). As explained in \cite{AK}, a codimension \(d\) embedding, \(X\rightarrow Y\), of smooth and possibly infinite dimensional schemes induces a \emph{Gysin map} on cohomologies, \(H^*(X)\rightarrow H^{*+2d}(Y)\). We require one more definition:
\begin{definition} Let \(\mathcal{Y}=\underrightarrow{\lim}_{j}\mathcal{Y}^{j}\) be an ind-scheme. Then a \emph{dimension theory}, \(\Delta\), for \(\mathcal{Y}\) is a rule assigning an integer \(\Delta(j)\) to each scheme \(\mathcal{Y}^{j}\) in such a way that for \(i<j\), the codimension of \(\mathcal{Y}^{i}\) inside \(\mathcal{Y}^{j}\) is equal to \(\Delta(j)-\Delta(i)\). \end{definition} 
\begin{remark} \begin{itemize} \item It is clear that a  dimension theory always exists for \(\mathcal{Y}\) and the set of such is a \(\mathbb{Z}\)-torsor for connected $\mathcal{Y}$. \item Given two presentations of \(\mathcal{Y}\) as a colimit of schemes along finite codimensional closed embeddings, a dimension theory with respect to one presentation canonically produces a dimension theory with respect to the other. \end{itemize}\end{remark}

We require one final piece of data, here we assume that we are given a smooth ind-scheme $\mathcal{Y}$ equipped with a global function $F:\mathcal{Y}\rightarrow\mathbb{A}^{1}$.

\begin{definition} We say that $F$ is smooth if there is a presentation of $\mathcal{Y}$ as an ind-limit of smooth closed subschemes on which $F$ is smooth in the ususal sense. \end{definition}

\begin{remark} If $F$ is smooth over $\mathbb{G}_{m}$ then the nearby fibres are endowed with a smooth structure as ind-schemes. \end{remark}

We are finally in the position to define the renormalized cohomology of a smooth ind-scheme, \(\mathcal{Y}\), equipped with a dimension theory \(\Delta\).
\begin{definition} We define \(H^{*}_{\Delta}(\mathcal{Y})=\underrightarrow{\lim}_{j}H^{*+2\Delta(j)}(\mathcal{Y}^{j})\), where \(\{\mathcal{Y}^{j}\}_{j}\) is a given smooth presentation of \(\mathcal{Y}\) and the limit is taken with respect to the Gysin maps. In the presence of a global function $F$ which is smooth over $\mathbb{G}_{m}$ in the above sense, the natural renormalized cohomology of the nearby fibre will be referred to as \emph{renormalized nearby cohomology}, and denoted $H^{*}_{\Delta}(\mathcal{Y},F)$.\end{definition}
\begin{remark} This doesn't depend on the presentation.  The dependence on the choice of dimension theory is only up to a shift in degree. We will often drop the \(\Delta\) subscript in what follows, and instead write \(H^{*}_{ren}\), which we'll refer to as \emph{renormalised cohomology}.\end{remark}

\begin{remark} We remark that the definition given above makes perfect sense for $*<0$, if we agree that for a topological space $Y$, $H^{<0}(Y):=0$. Further, the definition above may be non-zero for negative $*$. Indeed let us fix the unique dimension theory on $\mathbb{A}^{\infty}\setminus\{0\}$ such that $\Delta(\mathbb{A}^{1})=0$. Then there is a stable class in $H^{-1}_{\Delta}$ corresponding to the natural classes in $H^{2d-1}(\mathbb{A}^{d}\setminus\{0\})$. Now, in this case the choice of another dimension theory allows us to fix this. Note however that since dimension theories form a $\mathbb{Z}$ torsor, if there are non-zero classes in arbitrarily negative degrees with respect to one dimension theory, then this is true with respect to any other dimension theory. Such can certainly be the case, as the example of $\mathbb{P}^{\infty}$ shows. A hyperplane class in $H^{2}(\mathbb{P}^{n})$ can easily be checked to survive in the colimit, and  as $n$ tends to infinity, these live in more and more negative degrees. \end{remark}

\section{The Computation}
\subsection{The Set-Up} For ease of reference we recall the set-up. Let \(F\in\mathcal{O}(\mathbb{A}^{d})\) be a homogenous polynomial with isolated singularity at \(0\). The degree of homogeneity of \(F\) will be denoted \(\delta\). The coordinates on the affine space \(\mathbb{A}^{d}\) will be denoted with upper indices, \(\{z^{1},...,z^{d}\}\). The loop space \(\mathcal{L}\mathbb{A}^{d}\) is then \(\underrightarrow{\lim}_{n}\,\mathbb{A}\{z^{i}_{j}\mid i\in\big[1,d\big], j\in\mathbb{Z}_{\geq -n}\}\) and it carries the function \(\Lambda(F)=CT(F(z^{1}(t),...,z^{d}(t)))\), where \(CT\) denotes the constant term and \(z(t)=\sum_{i}{z_{i}}t^{i}\). The closed subscheme \(\{\Lambda(F)=1\}\) is denoted \(\mathcal{X}\). 

Let us say something about the smooth structure: we write \(\mathcal{X}^{n}_{m}=\mathcal{X}\cap\mathcal{L}^{n}_{m}\mathbb{A}^{d}\), where \(\mathcal{L}^{n}_{m}\mathbb{A}^{d}\) is defined to be \(\mathbb{A}\{z^{i}_{j}\mid i\in\big[1,d\big], j\in \big[-n,m\big]\}\). Evidently we have that $$\mathcal{X}=\underrightarrow{\lim}_{n}\,\underleftarrow{\lim}_{m}\,\mathcal{X}^{n}_{m}.$$ We now record a trivial lemma, guaranteeing that we may define the renormalised cohomology of \(\mathcal{X}\). 
\begin{lemma} The family \(\{\mathcal{X}^{n}_{m}\}_{n,m}\) gives a smooth presentation for \(\mathcal{X}\), i.e. every space \(\mathcal{X}^{n}_{m}\) is smooth. \end{lemma}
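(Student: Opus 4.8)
The claim is that each $\mathcal{X}^n_m = \{\Lambda(F) = 1\} \cap \mathcal{L}^n_m\mathbb{A}^d$ is a smooth (finite-type) scheme. Since $\mathcal{L}^n_m\mathbb{A}^d$ is just the affine space $\mathbb{A}\{z^i_j : i \in [1,d],\, j \in [-n,m]\}$, and $\mathcal{X}^n_m$ is the vanishing locus of a single equation $\Lambda(F)\big|_{\mathcal{L}^n_m} = 1$, it suffices to check that $1$ is a regular value of the restricted function, i.e. that the differential of $\Lambda(F)\big|_{\mathcal{L}^n_m}$ is nowhere vanishing on $\mathcal{X}^n_m$.

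The plan is to compute this differential explicitly via the chain rule. Writing $z^i(t) = \sum_j z^i_j t^j$ (now a finite sum, $j \in [-n,m]$), we have $\Lambda(F) = CT\big(F(z^1(t),\dots,z^d(t))\big)$, and hence
\[
\frac{\partial \Lambda(F)}{\partial z^i_j} = CT\!\left(t^j \cdot \frac{\partial F}{\partial z^i}\big(z^1(t),\dots,z^d(t)\big)\right) = \text{coefficient of } t^{-j} \text{ in } \partial_i F(z(t)).
\]
So the differential of $\Lambda(F)\big|_{\mathcal{L}^n_m}$ vanishes at a point $(z^i_j)$ precisely when, for every $i$ and every $j \in [-n,m]$, the $t^{-j}$-coefficient of $\partial_i F(z(t))$ is zero. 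Now I would use homogeneity: $F$ homogeneous of degree $\delta$ forces $\partial_i F$ to be homogeneous of degree $\delta - 1$, so $\partial_i F(z(t))$ is a Laurent polynomial whose lowest- and highest-order terms in $t$ are controlled (its $t$-degrees lie in the range $[(\delta-1)(-n), (\delta-1)m]$, which contains $[-m, n]$ once we are looking at the relevant coefficients — one should check the indexing carefully here, but the point is that the coefficients we are killing include enough of them). The key input is the isolated singularity hypothesis: the common zero locus of $\partial_1 F, \dots, \partial_d F$ in $\mathbb{A}^d$ is just $\{0\}$. I would argue that if all the listed coefficients of $\partial_i F(z(t))$ vanish, then in fact $\partial_i F(z(t)) = 0$ as a Laurent series for each $i$ — this is where the degree bound from homogeneity is used, to ensure the window $[-n,m]$ of killed coefficients already covers the full support of $\partial_i F(z(t))$. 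Once $\partial_i F(z(t)) \equiv 0$ for all $i$, evaluating at any $t = t_0 \in \mathbb{C}^\times$ (or using that $\mathbb{C}((t))$ localizes to a field and $z(t)$ defines a point of $\mathbb{A}^d$ over that field) shows $z(t)$ lands in the singular locus $\{0\}$, i.e. all $z^i_j = 0$. But the origin does not lie on $\mathcal{X}^n_m$ since $\Lambda(F)(0) = 0 \neq 1$. Hence the differential is nonvanishing on $\mathcal{X}^n_m$, and $\mathcal{X}^n_m$ is smooth.

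The main obstacle I anticipate is the bookkeeping in the degree/index estimate: one must verify that the set of partial derivatives $\partial \Lambda(F)/\partial z^i_j$ that we are allowed to use (those with $j$ in the truncation window $[-n,m]$) really does detect \emph{all} coefficients of each $\partial_i F(z(t))$, given that $z(t)$ itself is truncated to $[-n,m]$ and $F$ has degree $\delta$. Since $\partial_i F(z(t))$ has $t$-support contained in $[-(\delta-1)n,\, (\delta-1)m]$, and we only directly control coefficients $t^{-j}$ for $j \in [-n,m]$, the naive window is too small when $\delta > 2$; the fix is that the loop-space coordinates appearing in those high/low coefficients can be re-expressed, or one passes to the colimit over $n$ and limit over $m$ more carefully — alternatively, and more cleanly, one observes that it is enough to exhibit \emph{one} nonvanishing partial derivative at each point, and the homogeneity (Euler) relation $\sum_i z^i \partial_i F = \delta F$ gives, upon taking $\Lambda$ of both sides, a relation among the $\partial \Lambda(F)/\partial z^i_j$ and the value $\Lambda(F) = 1$ that directly forbids simultaneous vanishing. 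I would pursue this Euler-relation route first, as it sidesteps the index estimate entirely: $\Lambda$ applied to $\sum_i z^i \partial_i F = \delta F$ yields $\sum_{i,j} z^i_{-j}\, \partial_j\big(\text{...}\big)$-type identity showing the gradient of $\Lambda(F)$ pairs nontrivially against the vector $(z^i_{-j})$ whenever $\Lambda(F) \neq 0$, hence is nonzero on $\mathcal{X}^n_m$.
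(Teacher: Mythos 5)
Your Euler-relation route is exactly the paper's proof: $\Lambda(F)$ restricted to $\mathcal{L}^{n}_{m}\mathbb{A}^{d}$ is still homogeneous of degree $\delta$ for the $\mathbb{G}_{m}$-action giving every $z^{i}_{j}$ weight one, so the identity $\sum_{i,j}z^{i}_{j}\,\partial_{z^{i}_{j}}\Lambda(F)=\delta\,\Lambda(F)$ forces every critical point into the central fibre, and hence $\{\Lambda(F)=1\}$ is smooth. You were right to abandon the first, coefficient-window argument (which, as you note, genuinely fails for $\delta>2$ since the controlled coefficients do not exhaust the support of $\partial_{i}F(z(t))$, and which would moreover invoke the isolated-singularity hypothesis that this lemma does not need); the Euler argument you settle on is correct and is precisely the one the paper gives.
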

\begin{proof} Recall that \(F\) is homogeneous of weight \(\delta >0\) and hence the \(\mathbb{G}_{m}\) action on \(\mathbb{A}^{d}\) induces one on \(\mathcal{L}\mathbb{A}^{d}\) for which \(\Lambda(F)\) has weight \(\delta\). Each \(\mathcal{X}^{n}_{m}\) is then the nearby fibre of a homogeneous function on an affine space. However Euler's formula guarantees that the singularities of such a function are in the central fibre, so we are done. \end{proof}
We will write \(\mathcal{X}^{n}=\underleftarrow{\lim}_{m}\mathcal{X}^{n}_{m}\) in what follows. We have seen above that \(\mathcal{X}^{n}\) is a smooth (infinite type) scheme and of course the colimit of the \(\mathcal{X}^{n}\) is \(\mathcal{X}\).\subsection{Proof of Main Theorem}\begin{proof} We'll break the proof up into a few steps.

(\emph{Step 1}) Observe that if we bound the conformal degree of our variables below by \(-n\), then homogeneity of \(F\) implies that no variables of conformal degree strictly greater than \(n(\delta -1)\) can show up in \(\Lambda(F)\) (more properly \(\Lambda(F)\) restricted to \(\mathcal{X}^{n}\)). This is simply because the conformal weight would become strictly positive otherwise. This immediately implies that we have an isomorphism: \[\mathcal{X}^{n}\xrightarrow{\sim}\mathcal{X}^{n}_{n(\delta-1)}\times\mathbb{A}\{z^{i}_{j}\mid i\in \big[1,d\big], j>n(\delta-1)\}.\] 

(\emph{Step 2}). We wish to calculate the Gysin maps in cohomology for the inclusions \(\mathcal{X}^{n}\hookrightarrow\mathcal{X}^{n+1}\). Now, assume given two smooth finite type spaces \(X\) and \(Y\) and a closed inclusion (relative to \(\mathbb{A}\{\mathbb{Z}_{\geq 0}\}\)), \(X\times\mathbb{A}\{\mathbb{Z}_{\geq 0}\}\hookrightarrow Y\times\mathbb{A}\{\mathbb{Z}_{\geq 0}\}\). Then to compute the Gysin map in cohomology for this inclusion we need only do it for \(X\hookrightarrow Y\).\\ \\ Using (\emph{step 1}) we see that we need to compute the Gysin map for the closed inclusion \(\mathcal{X}^{n}_{(n+1)(\delta -1)}\hookrightarrow\mathcal{X}^{n+1}_{(n+1)(\delta -1)}\). We denote by \(\mathcal{U}^{n}\) the open complement to this inclusion. So as to avoid having to write \((n+1)(\delta -1)\) repeatedly we hereby denote this number \(N\). Recall the Gysin exact sequence: \[\dotsm\rightarrow H^{*}(\mathcal{X}^{n}_{N})\rightarrow H^{*+2d}(\mathcal{X}^{n+1}_{N})\rightarrow H^{*+2d}(\mathcal{U}^{n})\rightarrow H^{*+1}(\mathcal{X}^{n}_{N})\rightarrow\dotsm;\] we'd like to use this to compute the Gysin maps by computing the cohomology of \(\mathcal{U}^{n}\). 

(\emph{Step 3}) Here we compute the cohomology of \(\mathcal{U}^{n}\) explicitly. This relies crucially on the hypothesis that \(F\) be a homogenous isolated singularity.
We claim that \(\mathcal{U}^{n}\) has the same cohomology as \(\mathbb{A}^{d}\backslash\{0\}\), i.e. the cohomology of \(S^{2d-1}\).  In order to prove this we firstly define \(\mathcal{U}^{n}(j)\) to be the locus of non-vanishing of \(\partial_{z^{j}_{N}}\Lambda(F)\). Monomials showing up in \(\Lambda(F)\) can contain at most one variable of conformal degree \(N\) for obvious degree reasons. This implies that \(\Lambda(F)\) looks like \[\sum_{j}z^{j}_{N}(\partial_{z^{j}_{N}}\Lambda(F))+\mathcal{O}(<N),\] where the big \(\mathcal{O}\) notation denotes a sum of monomials with no conformal degree \(N\) terms. Further, let us note that the chain rule for differentiation  implies that we have $\partial_{z^{j}_{N}}\Lambda(F)$ is the $t^{-N}$ coefficient of $\partial_{j}F(z^{1}(t),...,z^{d}(t))$. Now homogeneity of $F$ is used to note that we thus have $$ \partial_{z^{j}_{N}}\Lambda(F)=\partial_{j}F(z_{-n}^{1},...,z^{d}_{-n}).$$

We claim further that \(\mathcal{U}^{n}=\bigcup_{j}\mathcal{U}^{n}(j)\). To see this, recall that \(\mathcal{U}^{n}(j)\) is the locus of non-vanishing of \(\partial_{j}F(z^{1}_{-(n+1)},...,z^{d}_{-(n+1)})\) and the simultaneous vanishing locus of these partials is precisely the locus of vanishing of all the functions \(z^{i}_{-(n+1)}\), since \(F\) was assumed to have an isolated singularity at the origin. That is to say the locus of vanishing is precisely \(\mathcal{X}^{n}_{N}\), which is the complement to \(\mathcal{U}^{n}\), and so the claim holds. Now consider the map $$\mathcal{U}^{n}\longrightarrow\mathbb{A}^{d}\setminus\{0\},$$ defined as $(z^{1}_{-n},...,z^{d}_{-n})$. We claim that this map is a Zariski locally trivial affine space bundle. Indeed, let us cover $\mathbb{A}^{d}\setminus\{0\}$ by opens $\mathcal{V}(j)$ defined by the non-vanishing of $\partial_{j}F$. Recalling that we have $$\Lambda(F)=\sum_{j}z^{j}_{N}(\partial_{z^{j}_{N}}\Lambda(F))+\mathcal{O}(<N),$$ we easily see that $$\mathcal{U}^{n}(j)\longrightarrow\mathcal{V}(j)$$ is a globally trivial affine space bundle with fibre $\mathbb{A}\{z^{i}_{j}\mid (i,j)\in[1,d]\times [-n,N-1]\}$. This suffices to prove that $\mathcal{U}^{n}$ has the cohomology of a $2d-1$-sphere, independent of $n$.

\emph{Step 4} We can now conclude the proof quite easily. We begin with $*>0$. Examining the Gysin sequence  \[\dotsm\rightarrow H^{*}(\mathcal{X}^{n}_{N})\rightarrow H^{*+2d}(\mathcal{X}^{n+1}_{N})\rightarrow H^{*+2d}(\mathcal{U}^{n})\rightarrow H^{*+1}(\mathcal{X}^{n}_{N})\rightarrow\dotsm\]  we see stabilisation in the limit defining \(H^{*}_{ren}\) at \(n=0\) for any \(*>0\), because \(H^{>2d-1}(\mathcal{U}^{n})=0\) by the result above. By Milnor's results (cf. \cite{Milnor}) this will produce Milnor fibre cohomology concentrated in \(*=d-1\). 

The same argument implies stabilisation at \(n=1\) for \(*=0\). It then suffices to prove that this is \(0\), i.e. that \(H^{2d}(\mathcal{X}^{1})=0\). Looking at the Gysin sequence we must show that the residue map $$H^{2d-1}(\mathcal{U}^{n})\longrightarrow H^{0}(\mathcal{X}^{1})$$ is non-zero, as it is then an isomorphism. A representative for the non-zero class in $H^{2d-1}(\mathcal{U}^{n})$ is given with respect to the Cech cover $$\mathcal{U}^{n}=\bigcup_{j}\mathcal{U}^{n}(j),$$ by $d\log z^{1}_{-n}...d\log z^{d}_{-n}$ restricted to $\cap_{j}\mathcal{U}^{n}(j)$, which manifestly has non-vanishing residue, whence we are done. Finally, the case of $*<0$ splits into two subcases, according to whether $2d$ divides $*$. One takes the minimum $n>0$ so that $*+2nd$ is non-negative and applies arguments exactly as above to conclude stabilization at said $n$ if $*$ is not divisible by $2d$ and at $n+1$ otherwise, the vanishing is proven similarily. \end{proof}

We can now prove the vanishing of nearby cycles cohomology for the function $\widehat{F}$. We note that the central fibre is contractible so that we need only prove that the reduced cohomology of the nearby fibre vanishes. 

\begin{theorem} The reduced (co)homology of the ind-scheme $\mathcal{X}$ vanishes.\end{theorem}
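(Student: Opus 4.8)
The plan is to deduce this from the finite-stage Gysin computations already carried out in the proof of the Main Theorem. First I would record the formal reductions. Since singular chains have compact support and the structure maps $\mathcal{X}^{n}\hookrightarrow\mathcal{X}^{n+1}$ are closed embeddings, reduced homology commutes with the filtered colimit $\mathcal{X}=\underrightarrow{\lim}_{n}\mathcal{X}^{n}$, so that $\widetilde{H}_{*}(\mathcal{X})=\underrightarrow{\lim}_{n}\widetilde{H}_{*}(\mathcal{X}^{n})$; dually, the Milnor $\lim^{1}$ exact sequence $0\to\lim^{1}_{n}\widetilde{H}^{*-1}(\mathcal{X}^{n})\to\widetilde{H}^{*}(\mathcal{X})\to\lim_{n}\widetilde{H}^{*}(\mathcal{X}^{n})\to 0$ reduces the cohomological statement to understanding the tower $\{\widetilde{H}^{*}(\mathcal{X}^{n})\}_{n}$ under the restriction maps.

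The heart of the matter is the claim that the reduced cohomology of each stage $\mathcal{X}^{n}$ is concentrated in the single degree $d-1+2dn$, where it has dimension $\mu(F)$. To see this I would fix the dimension theory with $\Delta(\mathcal{X}^{n})=dn$, the codimension of $\mathcal{X}^{0}$ in $\mathcal{X}^{n}$. By Step 1, $\mathcal{X}^{n}$ is homotopy equivalent to the finite-dimensional smooth affine variety $\mathcal{X}^{n}_{n(\delta-1)}$, and by Steps 2--4 the Gysin maps $H^{*+2dn}(\mathcal{X}^{n})\to H^{*+2d(n+1)}(\mathcal{X}^{n+1})$ are isomorphisms once $n$ exceeds a threshold depending only on $*$; hence for each fixed $*$ and all large $n$ the group $H^{*+2dn}(\mathcal{X}^{n})$ equals the stable value $H^{*}_{ren}(\mathcal{X},\Lambda(F))$, which the Main Theorem identifies with $\mathbb{C}^{\mu(F)}$ for $*=d-1$ and $0$ otherwise. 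Propagating this backwards through the Gysin sequences, using the residue computation of Step 4 --- the nonvanishing, hence bijectivity, of the map $H^{2d-1}(\mathcal{U}^{n})\to H^{0}(\mathcal{X}^{n}_{N})$ --- to pin down the finitely many ``edge'' degrees $\{0,1,2d-1,2d\}$ in which the Gysin map need not be an isomorphism, and Milnor's description of the bottom stage $\mathcal{X}^{0}\simeq\{F=1\}$ as the base case, gives the asserted concentration at every stage. (When $d=1$ the Milnor fibre $\{F=1\}$ is disconnected; one then tracks its $\mu(F)$ reduced classes in degree $0$, which resurface in degree $2d$ one stage up, and the conclusion is unchanged.)

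Granting the concentration claim the theorem follows immediately. The degrees $d-1+2dn$ are pairwise distinct and tend to infinity, so for each fixed $k$ one has $\widetilde{H}_{k}(\mathcal{X}^{n})=0$ for all $n\gg k$, whence $\widetilde{H}_{k}(\mathcal{X})=\underrightarrow{\lim}_{n}\widetilde{H}_{k}(\mathcal{X}^{n})=0$. Likewise the towers $\{\widetilde{H}^{k}(\mathcal{X}^{n})\}_{n}$ and $\{\widetilde{H}^{k-1}(\mathcal{X}^{n})\}_{n}$ are eventually zero, hence Mittag-Leffler, so that both $\lim_{n}\widetilde{H}^{k}(\mathcal{X}^{n})$ and $\lim^{1}_{n}\widetilde{H}^{k-1}(\mathcal{X}^{n})$ vanish and $\widetilde{H}^{k}(\mathcal{X})=0$.

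I expect the one genuine difficulty to be the middle step: extracting the statement ``cohomology concentrated in a single degree that grows with $n$'' cleanly from the stabilisation assertions in the proof of the Main Theorem, and keeping honest track of the low edge degrees (and of the mildly exceptional case $d=1$). The colimit and $\lim^{1}$ bookkeeping is routine.
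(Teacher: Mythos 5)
Your proof is correct and follows essentially the same route as the paper: both deduce the vanishing from the fact that the reduced (co)homology of the finite stages $\mathcal{X}^{n}$ is supported in degrees tending to infinity with $n$, as read off from the Gysin computation in the proof of the main theorem. Your sharper intermediate claim --- concentration in the single degree $d-1+2dn$ with dimension $\mu(F)$ --- is correct, and in fact more accurate than the paper's stated lower bound of $2(n+1)d-1$ (compare $F=z^{2}$, where $\mathcal{X}^{n}\simeq S^{2n}$), though only the divergence of the supporting degrees is actually needed.
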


\begin{proof} One sees in the course of the above proof that we have that $\mathcal{X}^{n}$ has no reduced cohomology below degree $2(n+1)d-1$ and so the result is proven. \end{proof}

\end{document}